\crefname{equation}{}{}
\crefname{figure}{{\sc Figure}}{{\sc Figure}}
\crefname{subsection}{Subsection}{Subsections}
\newtheorem{theorem}{Theorem}[section]
\newtheorem{proposition}[theorem]{Proposition}
\newtheorem{remark}[theorem]{Remark}
\newtheorem{corollary}[theorem]{Corollary}
\newtheorem{lemma}[theorem]{Lemma}
\newtheorem{question}[theorem]{Question}
\newtheorem{conjecture}[theorem]{Conjecture}
\newtheorem*{definition*}{Definition}
\newtheorem{exam}{Example}
\def\F{\mathcal{F}}
\def\F{\mathcal{F}}
\def\Fbb{\mathbb{F}}
\def \F{{\mathbb F}}
\def \1{\textbf{1}}
\DeclareMathOperator{\SL}{SL}
\def\l({\left(}
\def\r){\right)}
\def\lv{\left\vert}
\def\rv{\right\vert}
\def\lo{\left\{}
\def\ro{\right\}}
\def\rar{\rightarrow}
\begin{document}
\title{Sets preserved by a large subgroup of the special linear group}

\author{Le Quang Hung\thanks{Hanoi University of Science and Technology, Hanoi. Email: hung.lequang@hust.edu.vn}\and Thang Pham\thanks{University of Science, Vietnam National University, Hanoi. Email: thangpham.math@vnu.edu.vn}\and Kaloyan Slavov\thanks{Department of Mathematics, ETH Z\"{u}rich. Email: kaloyan.slavov@math.ethz.ch}}
\maketitle

\begin{abstract}
Let $E$ be a subset of the affine plane over a finite field $\F_q$. We bound the size of the subgroup of $\SL_2(\F_q)$ that preserves $E$. As a consequence, we show that if $E$ has size $\ll q^\alpha$ and is preserved by $\gg q^\beta$ elements of $\SL_2(\F_q)$ with $\beta\geq 3\alpha/2$, then $E$ is contained in a line. This result is sharp in general, and will be proved by using combinatorial arguments and applying a point-line incidence bound in $\mathbb{F}_q^3$ due to Mockenhaupt and Tao (2004).  
\end{abstract}

\section{Introduction}
Let $\F_q$ be a finite field of order $q$, let $E$ be a subset of $\F_q^2$, and let $R$ be a subset of $\SL_2(\F_q)$. In a recent work \cite{HHIP} on packing sets over finite fields, Hegyv\'{a}ri, Hung, Iosevich, and Pham give lower bounds for the size of
\[ R (E) := \bigcup_{\theta \in R} \theta (E) = \lo \theta (x) \colon x\in E, \theta \in R \ro \]
in terms of $|E|,|R|, q$ (and other quantities determined by $R$ and $E$). However, in some cases, only the trivial bound $|R(E)|\geq |E|$ is available (see Proposition 2.1, Theorem 2.2, and Theorem 2.3 in \cite{HHIP}). 

In the present paper, we ask for a description of extremal configurations, that is, sets $E\subset \F_q^2$ which ``stay packed" under $\SL_2(\F_q)$. More precisely, our goal is to study sets $E\subset\F_q^2$ such that $R(E)=E$ for a large subset $R$ of $\SL_2(\F_q)$. 

Throughout the paper, $X\ll Y$ means $X\leq CY$ for a constant $C$; $X\gg Y$ means $X\geq C Y$ for a constant $C$, and $X\approx Y$ means $X\ll Y$ and $Y\ll X$.

For a subset $E$ of $\F_q^2$, let 
\[R_E=\{\theta\in\SL_2(\F_q)\ |\ \theta(E)=E\}.\]

\begin{exam}
When $E$ is empty, $\{(0,0)\}$, all of $\F_q^2$, or $\F_q^2-\{(0,0)\}$, we have that $R_E=\SL_2(\F_q)$ has size $\approx q^3$.  
\end{exam}

\begin{exam}\label{Exa:basic_line}
If $E$ is a line, then $|E|=q$ and $|R_E|=q^2-q\approx q^2$.
\end{exam}

Note that for any $E\subset\F_q^2$, denoting the complement $\F_q^2-E$ by $E^c$, we have $R_E=R_{E^c}$. So Example \ref{Exa:basic_line} also shows that when $E$ is the complement of a line, we have $|R_E|\approx q^2.$

\begin{exam}
   For positive divisor $c$ of $q-1$, there exists a subgroup $S \subset \Fbb_q^\ast$ with $|S| = (q-1)/c$. Let 
\[ E = \lo  (0,y) \in \Fbb_q^2 \colon y \in S \ro. \]
Then 
\[ R_E = \lo  \begin{pmatrix}
    a & 0 \\
    d & a^{-1}
\end{pmatrix} \in \SL_2(\Fbb_q) \colon a \in S , d \in \Fbb_q \ro .\]
Here again $|E|= |S| = (q-1)/c\approx q$ and $|R|= q|S| = q(q-1)/c\approx q^2$. 
\end{exam}


Motivated by these examples, we ask the following

\begin{question}
Let $E$ be a subset of $\F_q^2$ that is preserved under ``many" elements in $\SL_2(\F_q)$. Does it follow (under an assumption on $|E|$) that $E$ is contained in a line?
\end{question}

\begin{remark}
There is a significant difference between the cases when the ground field is a prime field versus a general finite field. In this paper, we treat these cases separately. More precisely, while tools from group theory would be enough to study the prime field setting, for the arbitrary field setting we use a combination of combinatorial arguments and a point-line incidence bound due to Mockenhaupt and Tao \cite{MT04}.
\end{remark}

We answer in the affirmative. 

\begin{theorem}\label{main-thm}
Given $c_1>0$, there exists a constant $c_2>0$ with the following property. Let $\alpha,\beta\geq 0$ be such that $\beta\geq 3\alpha/2$. Let $E$ be a subset of $\F_q^2$ with $|E|\leq c_1 q^{\alpha}$. Suppose the set $R_E$ of matrices $\theta$ in $\SL_2(\Fbb_q)$ such that $\theta (E) = E$ satisfies $|R_E|\geq c_2 q^\beta$. Then $E$ is contained in a line. 
\end{theorem}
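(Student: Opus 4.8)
The plan is to prove the contrapositive in the sharp quantitative form: \emph{if $E$ is not contained in any line, then $|R_E|\ll |E|^{3/2}$}. Granting this, the theorem follows at once, since then $|R_E|\ll |E|^{3/2}\leq c_1^{3/2}q^{3\alpha/2}\leq c_1^{3/2}q^{\beta}$, so choosing $c_2$ larger than the implied constant times $c_1^{3/2}$ contradicts the hypothesis $|R_E|\geq c_2 q^{\beta}$; hence $E$ must lie on a line.

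First I would record the basic structure. The set $R_E$ is a subgroup of $\SL_2(\F_q)$, every element fixes the origin, and if $E\subseteq\{(0,0)\}$ we are done. If $E$ is not contained in any line, then in particular it is not contained in a line through the origin, so $E$ contains two linearly independent points $x,y$; set $\delta=\det(x,y)\neq 0$. The key reduction is that the map $\theta\mapsto(\theta x,\theta y)$ is injective on $\SL_2(\F_q)$ (a linear map is determined by the images of a basis) and sends $R_E$ into the set $\{(p,p')\in E\times E:\det(p,p')=\delta\}$, because $\theta$ preserves $E$ and $\det(\theta x,\theta y)=\det(\theta)\det(x,y)=\delta$. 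Writing $N(\delta)$ for the size of this set, we obtain $|R_E|\leq N(\delta)$.

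It remains to bound $N(\delta)$. For each nonzero $p=(p_1,p_2)\in E$ the equation $\det(p,q)=\delta$ cuts out an affine line $L_p=\{q:p_1q_2-p_2q_1=\delta\}$, and distinct $p$ give distinct lines (the direction of $L_p$ is $p$, and parallel ones occur at different offsets). Hence $N(\delta)=\sum_{p}|E\cap L_p|$ is precisely the number of incidences between the point set $E$ and the family of at most $|E|$ lines $\{L_p\}$ in $\F_q^2$. The plan is to bound this incidence count by $\ll|E|^{3/2}$. This is where a point--line incidence estimate enters: one may invoke the Mockenhaupt--Tao bound \cite{MT04} after lifting the configuration to $\F_q^3$ in the natural way; alternatively, since here the number of lines is at most the number of points, a Cauchy--Schwarz argument using that two distinct points lie on at most one common line already yields $N(\delta)\ll|E|^{3/2}$.

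The main obstacle is this incidence step and its sharpness. The exponent $3/2$ is exactly what the hypothesis $\beta\geq 3\alpha/2$ is tuned to, so any loss in the incidence bound is fatal; securing the clean $|E|^{3/2}$ uniformly in $\delta$ and across the relevant size regime of $E$ is the heart of the matter and is precisely the role of the Mockenhaupt--Tao estimate in the general-field setting. Two boundary situations should be handled separately. When $E$ lies on a line \emph{through the origin} the pair $x,y$ does not exist and $|R_E|$ can be as large as $\approx q^2$ (cf.\ Example~\ref{Exa:basic_line}), but then the desired conclusion already holds. When $|E|$ is very large relative to $q$ (which would force $\alpha>2$), the crude bound $|R_E|\leq |E|\cdot q$, coming from orbit--stabilizer applied to a single nonzero point $x\in E$ together with $|\mathrm{Stab}_{\SL_2(\F_q)}(x)|=q$, already contradicts the hypotheses. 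Finally, as indicated in the remark, over prime fields one expects to bypass incidences entirely and argue through the subgroup structure of $\SL_2(\F_p)$.
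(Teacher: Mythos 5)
Your proposal is correct, and it takes a genuinely different --- and considerably more elementary --- route than the paper. Your key move is to fix a basis $x,y\in E$ with $\delta=\det(x,y)\neq 0$ and observe that $\theta\mapsto(\theta x,\theta y)$ injects $R_E$ into $\{(p,p')\in E\times E:\det(p,p')=\delta\}$; this reduces everything to counting pairs of points of $E$ with a fixed nonzero determinant, i.e., to a point--line incidence problem in $\F_q^2$ between the $|E|$ points of $E$ and the at most $|E|$ distinct lines $L_p=\{z:\det(p,z)=\delta\}$. Since two distinct points lie on at most one common line, the Cauchy--Schwarz (K\H{o}v\'ari--S\'os--Tur\'an) bound gives $I\leq |E|\,|\mathcal{L}|^{1/2}+|\mathcal{L}|\leq 2|E|^{3/2}$, hence $|R_E|\leq 2|E|^{3/2}$ whenever $E$ contains two linearly independent points; this is precisely Theorem \ref{Prop:main_bound_set_of_points}, and your deduction of Theorem \ref{main-thm} from it coincides with the paper's. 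The paper instead proves Theorem \ref{Prop:main_bound_set_of_points} by stratifying the lines through the origin by how many points of $E$ they carry (Lemmas \ref{Lem:on_2_lines_|R_E|_leq_|E|} and \ref{Lem: O(1) lines}), lifting matrices to points of $\F_q^3$ and the constraints $\theta u_1=u_2$ to lines in $\F_q^3$ (Lemma \ref{lem1.0}), and invoking the Mockenhaupt--Tao bound (Theorem \ref{lem4.2}). The reason the paper cannot get away with the trivial incidence bound in that formulation is that there each $\theta$ is incident to roughly $m_0$ of the lines (one per base point $a_j$), so the lower bound on incidences is $m_0|S|$, and a K\H{o}v\'ari--S\'os--Tur\'an estimate loses exactly that factor; the plane condition exploited by Mockenhaupt--Tao is what rescues their count. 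Your formulation counts each $\theta$ exactly once, via its action on a single fixed basis, so the incidence problem decouples from the group and the trivial bound suffices --- that is the real saving. What the heavier route buys the paper is the finer intermediate estimate $|R_E|\ll(m_0m_1)^{3/2}$ of Proposition \ref{Prop:main_bound_set_of_pointss}, which can be much smaller than $|E|^{3/2}$ when the distribution of $E$ over lines through the origin is unbalanced; your argument does not recover that refinement, but it is not needed for Theorem \ref{main-thm}. Two minor trims: your parenthetical suggestion to instead ``lift to $\F_q^3$ and invoke Mockenhaupt--Tao'' is unnecessary (and would reintroduce the multiplicity issue just described), and the separate orbit--stabilizer treatment of very large $|E|$ is superfluous, since your bound $|R_E|\leq 2|E|^{3/2}$ holds with no restriction on $|E|$.
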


Note that the assumption involves merely counting data for $|E|$ and $|R_E|$ while the conclusion is a geometric structure on $E$. 
In the next corollary, we spell out some illustrative special cases.

\begin{corollary}\label{main-thm'} Let $E$ be a subset of $\F_q^2$. Let $R_E$ denote the set of matrices $\theta$ in $\SL_2(\Fbb_q)$ such that $\theta (E) = E$. 
\begin{itemize}
\item[(i)]If $|E|\approx q^{1/2}$ and $|R_E|\gg q^{3/4}$, then $E$ is contained in a line;
    \item[(ii)] If $|E|\approx q^{2/3}$ and $|R_E|\gg q$, then $E$ is contained in a line;
    \item[(iii)]  If $|E|\approx q$ and $|R_E|\gg q^{3/2}$, then $E$ is contained in a line;
    \item [(iv)] If $|E|\approx q^{4/3}$ and $|R_E|\geq q^{2}$, then $E$ is contained in a line (in particular $|E|\leq q$).
   \end{itemize}
\end{corollary}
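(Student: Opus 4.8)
The plan is to prove the contrapositive in sharp quantitative form: if $E$ is \emph{not} contained in any line, then $|R_E|\ll |E|^{3/2}$ with an absolute implied constant $C$. Granting this, the theorem follows by bookkeeping of constants. Combining $|R_E|\le C|E|^{3/2}$ with $|E|\le c_1q^{\alpha}$ and $\beta\ge 3\alpha/2$ gives $|R_E|\le Cc_1^{3/2}q^{3\alpha/2}\le Cc_1^{3/2}q^{\beta}$, so choosing $c_2:=Cc_1^{3/2}+1$ contradicts $|R_E|\ge c_2q^{\beta}$; hence $E$ must lie on a line. The exponent $3/2$ is forced: for $E=\F_{q'}^2$ a subfield plane one has $|R_E|\approx |E|^{3/2}$ with a \emph{small} constant, which both shows sharpness and explains why $c_2$ must be taken large. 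These subfield examples recover exactly the thresholds in cases (i)--(iv) of the corollary.

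First I would replace the group-theoretic quantity $|R_E|$ by a determinant count. Since $E$ is not contained in a line, it is not contained in a line through the origin, so $E$ contains two linearly independent vectors $x_1,x_2$; put $\Delta:=\det[x_1\mid x_2]\neq 0$. The evaluation map $\theta\mapsto(\theta x_1,\theta x_2)$ is injective on $\SL_2(\F_q)$ (a linear map is determined on a basis), every $\theta\in R_E$ sends $x_1,x_2$ into $E$, and $\det[\theta x_1\mid \theta x_2]=\det(\theta)\,\Delta=\Delta$. Therefore
\[ |R_E|\ \le\ N_\Delta(E)\ :=\ \#\{(u,v)\in E\times E:\ \det[u\mid v]=\Delta\}. \]
For fixed $u$ the equation $\det[u\mid v]=\Delta$ is a single nonzero linear condition on $v$, cutting out a line $m_u$ in the plane, so $N_\Delta(E)=\sum_{u\in E}|E\cap m_u|$ is exactly the number of incidences between the point set $E$ and the line family $\{m_u\}_{u\in E}$. (Points equal to the origin never contribute, as $\Delta\neq 0$, so I may discard them.)

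The heart of the matter is the incidence estimate $N_\Delta(E)\ll |E|^{3/2}$, which is precisely a point--line incidence bound; this is where the geometry enters and where the authors invoke the Mockenhaupt--Tao bound in $\F_q^3$. I would obtain the estimate by the elementary double count: two \emph{distinct} points $v\neq v'$ lie on at most one common line $m_u$, since $\det[u\mid v]=\det[u\mid v']=\Delta$ is a linear system in $u$ with determinant $\det[v\mid v']$, which is either nonzero (unique $u$) or zero (forcing $v=v'$, hence no solution). Consequently $\sum_{u}\binom{|E\cap m_u|}{2}\le\binom{|E|}{2}$, and Cauchy--Schwarz turns this into $N_\Delta(E)\ll |E|^{3/2}$. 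The only points needing care are the degenerate ones (the origin, and proportional pairs), which the computation above dispatches.

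The conceptual crux --- and what I expect to be the genuine obstacle --- is finding the reduction rather than the final estimate: recognizing that preservation of $E$ forces pairs of points of $E$ into a \emph{fixed}-determinant relation, so that $|R_E|$ is controlled by a planar point--line incidence count. Once that is in place the bound $N_\Delta\ll|E|^{3/2}$ is robust (the elementary argument above suffices, and the Mockenhaupt--Tao $\F_q^3$ bound gives an alternative route), and the constant chase in the first paragraph completes the proof. A separate, cleaner treatment is available over prime fields: there $R_E$ is a subgroup of $\SL_2(\F_p)$, and when it is large Dickson's classification forces it into a conjugate of a Borel subgroup (the stabilizer of a line through the origin); since a set of bounded size can only be a union of the small Borel orbits, it must lie on that line.
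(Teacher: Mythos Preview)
Your argument is correct and complete. The reduction $|R_E|\le N_\Delta(E)$ via a fixed basis $x_1,x_2\in E$, followed by the elementary double count showing that each pair $\{v,v'\}\subset E$ lies on at most one line $m_u$, gives $|R_E|\le N_\Delta(E)\ll |E|^{3/2}$ with an absolute constant, and the constant bookkeeping in your first paragraph then yields both Theorem~\ref{main-thm} and the corollary.

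However, your route is genuinely different from the paper's, and in fact considerably simpler. The paper does \emph{not} pass to the fixed-determinant count $N_\Delta(E)$ in the plane. Instead it maps $\SL_2(\F_q)$ into $\F_q^3$ via $\begin{pmatrix}a&b\\c&d\end{pmatrix}\mapsto(a,d,c)$, identifies each coset $\{\theta:\theta m_1=m_2\}$ with a line in $\F_q^3$, and bounds the resulting three-dimensional point--line incidence count using the Mockenhaupt--Tao theorem (Theorem~\ref{lem4.2}). This requires verifying a nontrivial non-coplanarity condition on the line family (at most $2m_0$ lines per plane) and splitting into several sub-cases according to how many lines through the origin meet $E$ and with what multiplicities (Lemmas~\ref{Lem:on_2_lines_|R_E|_leq_|E|} and~\ref{Lem: O(1) lines}, Proposition~\ref{Prop:main_bound_set_of_pointss}). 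What the paper's approach buys is the refined bound $|R_E|\ll (m_0m_1)^{3/2}$ of Proposition~\ref{Prop:main_bound_set_of_pointss}, stated in terms of the incidence structure of $E$ with lines through the origin; what your approach buys is a one-paragraph proof of the headline estimate $|R_E|\ll|E|^{3/2}$ that avoids the $\F_q^3$ detour, the Mockenhaupt--Tao input, and the case analysis entirely. For the corollary and for Theorem~\ref{main-thm} your planar Cauchy--Schwarz argument is strictly sufficient.
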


The key ingredient in the proof of Theorem \ref{main-thm} is the next result, which is of independent interest. 

\begin{theorem}
Let $E\subset \Fbb_q^2$. Let $R_E$ be the set of matrices $\theta$ in $\SL_2(\Fbb_q)$ such that $\theta (E) = E$. Suppose at least $2$ lines through the origin intersect $E-\{(0,0)\}$.
Then $|R_E|\ll |E|^{3/2}$.

    \label{Prop:main_bound_set_of_points}
\end{theorem}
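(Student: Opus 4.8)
The plan is to realise $R_E$, which is the stabiliser of $E$ and hence a subgroup of $\SL_2(\Fbb_q)$, as an injective image inside a set of pairs of points of $E$ subject to a single determinant constraint, and then to estimate the size of that set by a point-line incidence bound. Write $S=E\setminus\{(0,0)\}$, so that $|S|\le|E|$ and every $\theta\in R_E$ maps $S$ onto $S$ (it fixes the origin and permutes $E$, and it is a bijection). The hypothesis that at least two lines through the origin meet $S$ is exactly the statement that $S$ contains two linearly independent vectors $x_0,y_0$; set $\Delta=\det(x_0\mid y_0)\neq 0$, where $\det(u\mid v)=u_1v_2-u_2v_1$ denotes the determinant of the matrix with columns $u,v$.

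First I would introduce the evaluation map
\[ \Phi\colon R_E\longrightarrow S\times S,\qquad \Phi(\theta)=(\theta x_0,\theta y_0). \]
Because $\{x_0,y_0\}$ is a basis of $\Fbb_q^2$, a matrix $\theta$ is determined by the pair $(\theta x_0,\theta y_0)$, so $\Phi$ is injective. Moreover $\det(\theta x_0\mid \theta y_0)=\det(\theta)\,\det(x_0\mid y_0)=\Delta$ for every $\theta\in\SL_2(\Fbb_q)$. Hence
\[ |R_E|\le \bigl|\{(u,v)\in S\times S : u_1v_2-u_2v_1=\Delta\}\bigr|. \]
This is the step that uses the two-directions hypothesis, through the injectivity of $\Phi$; without it $R_E$ can be as large as $\approx q^2$, as the examples in the introduction show.

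Next I would read the right-hand side as a point-line incidence count in $\Fbb_q^2$. For each fixed $u\in S$ the equation $u_1v_2-u_2v_1=\Delta$ cuts out a line $\ell_u$ in the $v$-plane, and distinct $u\in S$ yield distinct lines: if $u'=\lambda u$ then the defining equation of $\ell_{u'}$ forces the constant $\Delta/\lambda$, so $\lambda=1$, while non-proportional $u,u'$ give lines with different normals. Thus the count above is precisely the number of incidences $I(S,\mathcal L)$ between the $|S|$ points of $S$ and the family $\mathcal L=\{\ell_u:u\in S\}$ of $|S|$ distinct lines. The elementary incidence bound, coming from Cauchy--Schwarz together with the fact that two distinct lines meet in at most one point, gives $I(P,\mathcal L)\ll |P|^{1/2}|\mathcal L|+|P|$, which with $|P|=|\mathcal L|=|S|$ yields $I(S,\mathcal L)\ll |S|^{3/2}$. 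Combining, $|R_E|\ll|S|^{3/2}\le|E|^{3/2}$, as claimed.

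The conceptual crux is the determinant reduction: turning the algebraic condition $\theta(E)=E$ into a count of pairs of points of $E$ with prescribed ``area'', which is then governed by incidence geometry. The incidence estimate itself is routine here, and the main point I expect to verify is that it delivers exactly the exponent $3/2$ and that no bound stronger than the Cauchy--Schwarz one is required. Should a more uniform treatment be preferred, the same count can instead be phrased as a point-line incidence problem in $\Fbb_q^3$ and handled by the Mockenhaupt--Tao bound, which is the route the introduction advertises for the main theorem.
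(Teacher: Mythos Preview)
Your argument is correct and considerably simpler than the paper's. The paper proceeds by partitioning the lines through the origin according to how many points of $E$ they carry, then in the main case (Proposition~\ref{Prop:main_bound_set_of_pointss}) selects one representative point per line, maps $\SL_2(\Fbb_q)$ into $\Fbb_q^3$ via $(a,d,c)$, associates to each pair of representatives a line in $\Fbb_q^3$, checks a coplanarity condition (at most $2m_0$ lines per plane), and invokes the Mockenhaupt--Tao incidence bound; the remaining cases are handled separately by Lemmas~\ref{Lem:on_2_lines_|R_E|_leq_|E|} and~\ref{Lem: O(1) lines}. You bypass all of this: fixing an independent pair $x_0,y_0\in E\setminus\{0\}$ and using the determinant constraint $\det(\theta x_0\mid\theta y_0)=\Delta\neq 0$ reduces directly to a $2$-dimensional point--line incidence count with $|S|$ points and $|S|$ distinct lines, where already the trivial Cauchy--Schwarz bound $I\le |P|^{1/2}|\mathcal L|+|P|$ delivers $|R_E|\ll|E|^{3/2}$. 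What the paper's route buys is the finer intermediate estimate $|R_E|\ll (m_0m_1)^{3/2}$ for any ``level set'' of $m_0$ lines each meeting $E$ in $m_1$ points, which can be strictly smaller than $|E|^{3/2}$; your argument gives only the global bound but with no case analysis and no appeal to Mockenhaupt--Tao.
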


The main difficulty in proving this theorem arises when the number of lines depends on $q$. 

The exponent $3/2$ in Theorem \ref{Prop:main_bound_set_of_points}
is sharp, as the example below illustrates.

\begin{exam}\label{Exa:F_p}
    For $p$ a prime and $q=p^2$, let $E= \Fbb_p^2\subset\F_q^2$. Then $R_E\supset \SL_2(\Fbb_p)$. So $|E|=q$ and $|R_E|\geq p^3 -p \approx q^{3/2} =|E|^{3/2}$.
\end{exam}

In fact, Example \ref{Exa:F_p} shows that Theorem \ref{main-thm} is sharp when $\alpha=1$ in the following sense: in the statement of Corollary \ref{main-thm'}(i), if one keeps the assumption $|E|\ll q$ but replaces $|R_E|\gg q^{3/2}$ by $|R_E|\gg q^{3/2-\varepsilon}$ with any $\varepsilon>0$, the statement would be false. 

The example below shows that Theorem \ref{main-thm} is in fact sharp in the sense described above for all $\alpha$ of the form $\alpha=1/s$ or $\alpha=2/s$ for a positive integer $s$.  

\begin{exam}\label{Exa_alphas}
Let $\alpha$ be of the form $\alpha=1/s$ or $\alpha=2/s$ for a positive integer $s$. Then there exist sufficiently large $q$ and subsets $E\subset\F_q^2$ of size $|E|\approx q^\alpha$ such that $|R_E|\approx q^{3\alpha/2}$ 
and $E$ is not contained in a line. Indeed, pick any positive integer $r'$, set $r=(2/\alpha)r'$, and consider $E=\F^2_{p^{r'}}\subset\F_q^2$, where $q=p^r$. Then $|E|=p^{2r'}=q^\alpha$ while $R_E$ contains $\SL_2(\F_{p^{r'}})$,
so $|R_E|\geq p^{3r'}-p^{r'}\approx q^{3\alpha/2}$.    
\end{exam}

The result below addresses in particular the case of a prime ground field. 

\begin{proposition}
Let $q=p^r$, where $r\geq 1$ and $p$ is prime.
Suppose $E$ is not $\F_q^2-\{(0,0)\}$ or all of $\F_q^2$.
Suppose at least $2$ lines through the origin intersect $E-\{(0,0)\}$. Then $|R_E|\leq p^{r-1}|E|$. 
\label{Prop:gp_theory}
\end{proposition}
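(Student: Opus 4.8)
The plan is to apply the orbit–stabilizer theorem to a carefully chosen nonzero point of $E$. Since $\SL_2(\F_q)$ acts linearly and fixes the origin, $R_E$ is a subgroup of $\SL_2(\F_q)$ fixing $(0,0)$, and for any nonzero $v\in E$ the orbit $R_E\cdot v$ lies in $E-\{(0,0)\}$, so $|R_E\cdot v|\le |E|$. The stabilizer of a nonzero vector $v$ in $\SL_2(\F_q)$ is the unipotent subgroup $U_v$ of order $q=p^r$; after a linear change of coordinates taking $v$ to $e_1$, it is the group of matrices $\begin{pmatrix}1 & b\\ 0 & 1\end{pmatrix}$, $b\in\F_q$, which is isomorphic to the additive group $(\F_q,+)\cong(\Z/p)^r$. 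Thus $\operatorname{Stab}_{R_E}(v)=R_E\cap U_v$ is a subgroup of an elementary abelian $p$-group, so it is either all of $U_v$ or has index at least $p$, in which case its order is at most $p^{r-1}$.

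The orbit bound alone only yields $|R_E|\le q\,|E|$; the whole point is to save the extra factor of $p$, and this is exactly where the hypotheses enter. Using that at least two lines through the origin meet $E-\{(0,0)\}$, I choose nonzero $v_1,v_2\in E$ lying on distinct such lines, so that $v_1,v_2$ are linearly independent. Suppose, for contradiction, that $\operatorname{Stab}_{R_E}(v_i)=U_{v_i}$ for both $i$, i.e. that $R_E$ contains both full unipotent subgroups $U_{v_1}$ and $U_{v_2}$. Conjugating by some $g\in \mathrm{GL}_2(\F_q)$ with $g e_1=v_1$ and $g e_2=v_2$ reduces the claim $\langle U_{v_1},U_{v_2}\rangle=\SL_2(\F_q)$ to the standard fact that the upper and lower unipotent subgroups generate $\SL_2(\F_q)$; hence $R_E=\SL_2(\F_q)$.

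It remains to rule this out. Since $\SL_2(\F_q)$ acts transitively on $\F_q^2-\{(0,0)\}$, its only invariant subsets are $\emptyset$, $\{(0,0)\}$, $\F_q^2-\{(0,0)\}$, and $\F_q^2$; as $E$ contains nonzero points and is, by hypothesis, neither $\F_q^2-\{(0,0)\}$ nor all of $\F_q^2$, it cannot be $\SL_2(\F_q)$-invariant. This contradiction forces, for at least one index $i$, the stabilizer $\operatorname{Stab}_{R_E}(v_i)$ to be a proper subgroup of $U_{v_i}$, hence of order at most $p^{r-1}$. Orbit–stabilizer then gives $|R_E|=|R_E\cdot v_i|\cdot|\operatorname{Stab}_{R_E}(v_i)|\le |E|\cdot p^{r-1}$, as desired. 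Everything else being a direct application of orbit–stabilizer together with the classification of invariant sets, I expect the only (standard) point needing care to be the generation statement, namely that two unipotent subgroups fixing linearly independent vectors generate all of $\SL_2(\F_q)$.
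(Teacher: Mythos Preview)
Your proof is correct and follows the same overall strategy as the paper: assume $|R_E|>p^{r-1}|E|$, use orbit--stabilizer to force the stabilizer in $R_E$ of a nonzero point of $E$ to be the full unipotent group of order $q$, and then derive $R_E=\SL_2(\F_q)$, contradicting the hypotheses on $E$.

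The difference is in how the last step is executed. You pick two linearly independent points $v_1,v_2\in E$, observe that both $U_{v_1}$ and $U_{v_2}$ lie in $R_E$, and invoke the standard fact that two unipotent subgroups fixing linearly independent vectors (equivalently, after a $\mathrm{GL}_2$-conjugation, the upper and lower unipotents) generate all of $\SL_2(\F_q)$. The paper instead argues more computationally: from the assumption it deduces that \emph{every} nonzero $x\in E$ has $\text{Stab}_{R_E}(x)=\text{Stab}_{\SL_2(\F_q)}(x)$; applying $B=U_{e_1}\subset R_E$ to a point $(u,v)\in E$ with $v\ne 0$ shows $E$ meets every line through the origin except possibly the $y$-axis; then for each slope $\beta$ the full stabilizer of $(1,\beta)$ lies in $R_E$, and applying all these explicitly to $e_1$ produces at least $(q-1)^2$ distinct points in the $R_E$-orbit of $e_1$, whence $|R_E|\ge (q-1)^2 q>|\SL_2(\F_q)|/2$ and so $R_E=\SL_2(\F_q)$. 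Your route is shorter and more conceptual; the paper's is entirely self-contained (no appeal to the generation statement) at the cost of a page of explicit calculation.
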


When $|E|\approx q^{\alpha}$, Proposition \ref{Prop:gp_theory} gives the bound  $|R_E|\leq q^{\alpha+(r-1)/r}$. Compared to Theorem \ref{Prop:main_bound_set_of_points}, this result is only interesting when $\alpha\geq 2-2/r$, which will be a very small range when $r$ is large.

\begin{remark}
Given $E\subset\F_q^2$, one can obtain an upper bound for $R_E$ as follows. 
Let $P=E\times E \subseteq \mathbb{F}_q^2\times \mathbb{F}_q^2$. Then, we know from \cite{HHIP} that
    \[\lv I(P,R_E)- \frac{\lv P\rv \lv R_E\rv}{q^2} \rv \ll q\sqrt{\lv R_E\rv \lv P\rv}+|R_E|.\]
If $1\ll |E|$ and $1\ll |E^c|$, using the fact that $I(P, R_E)=|R_E||E|$, one has $|R_E|\ll  q^2$ (suppose first $|E|<q^2/2$; then apply the same for the complement $E^c$).
However, the bound from Theorem \ref{Prop:main_bound_set_of_points} is better as long as $|E|\ll q^{4/3}$. 
\label{Rem:q^2_general_bound}
\end{remark}

It would be interesting to study analogues of Theorem \ref{main-thm} in higher dimensions. 

Based on Example \ref{Exa_alphas}, we conjecture that 
Theorem \ref{main-thm} is sharp for each specific $\alpha\in [0,1]$.

In light of results on the packing set problems over the reals (see, for example, \cite{IM} and references therein) Theorem \ref{main-thm} suggests that it is reasonable to state the following conjecture. 

\begin{conjecture}
     Let $E$ be a compact set in $\mathbb{R}^2$ with $\dim_H(E)=\alpha$. Assume that there exists a set $R$ of matrices $\theta$ in $\SL_2(\mathbb{R})$ such that
     $\theta(E)=E$ for each $\theta\in R$, with
     $\dim_H(R)> 3\alpha/2$. Then the set $E$ is contained in a line. Here $\dim_H$ means the Hausdorff dimension. 
\end{conjecture}

\section{Proof of Theorem \ref{main-thm}}

Let $P$ be a set of points and $L$ be a set of lines in $\mathbb{F}_q^3$. We denote the number of incidences between $P$ and $L$ by $I(P, L)$. The proof of Theorem \ref{main-thm} makes use of an incidence bound $I(P, L)$. A modification of the proof of Theorem 8.6 in  \cite{MT04} gives the following result.

\begin{theorem}[Mockenhaupt--Tao]
\label{lem4.2} 
    Let $P$ be a set of points in $\Fbb_p^3$ and $L$ be a set of lines in $\Fbb_p^3$. Assume that any plane contains at most $M$ lines from $L$, then we have 
    \[I(P, L)\le c\left( |P|^{1/2}|L|^{3/4}M^{1/4}+|P|+|L|\right),\]
with an absolute constant $c$.
\end{theorem}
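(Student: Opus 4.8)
The plan is to reduce the three-dimensional incidence problem to a collection of two-dimensional ones, one inside each plane, and to exploit the hypothesis that every plane carries at most $M$ lines to control the reassembly.

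First I would dispose of the degenerate regimes: if $I(P,L)\le 2(|P|+|L|)$ there is nothing to prove, so I may assume $I(P,L)\ge 2(|P|+|L|)$, which forces the incidences to concentrate on points lying on at least two lines. Writing $t(p)$ for the number of lines of $L$ through a point $p\in P$, Cauchy--Schwarz over $P$ gives
\[ I(P,L)^2=\Big(\sum_{p\in P}t(p)\Big)^2\le |P|\sum_{p\in P}t(p)^2=|P|\big(I(P,L)+Q\big), \]
where $Q=\sum_{p\in P}t(p)(t(p)-1)$ is the number of ordered pairs of distinct lines of $L$ that are concurrent at a point of $P$. Thus everything comes down to estimating $Q$. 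The geometric fact I would exploit is that two distinct concurrent lines span a \emph{unique} plane; consequently $Q$ splits as $Q=\sum_\pi Q_\pi$, summed over planes $\pi$, where $Q_\pi$ counts the concurrent pairs whose span is $\pi$. Each such pair lies in $L_\pi:=\{\ell\in L:\ell\subset\pi\}$, a set of at most $M$ lines, and $Q_\pi$ is precisely a two-dimensional incidence energy between $P\cap\pi$ and $L_\pi$ inside $\pi\cong\Fbb_p^2$.

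Inside a fixed plane I would use that two distinct lines meet in at most one point, so $Q_\pi\le m_\pi(m_\pi-1)$ with $m_\pi=|L_\pi|\le M$, together with the finer information coming from the points of $P\cap\pi$. The difficulty is the reassembly $\sum_\pi Q_\pi$: a single line lies in about $p$ planes, so bounding $Q$ by summing the crude per-plane estimate $m_\pi(m_\pi-1)\le M\,m_\pi$ reintroduces a factor of $p$ and destroys the gain. To avoid this I would run a dyadic decomposition over the point multiplicities, splitting $P$ into $P_\tau=\{p:\tau\le t(p)<2\tau\}$, and bound $|P_\tau|$ by playing two estimates against each other: a plane-free pair-counting bound (each pair of points determines at most one line), and a plane-based bound exploiting that a point of $P_\tau$ forces at least of order $(\tau/M)^2$ distinct planes through it to be rich, since its $\tau$ lines can be packed at most $M$ at a time into the planes through the point. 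Interpolating these two bounds across the dyadic scales, summing the resulting contributions to $Q$, and feeding the outcome back into the Cauchy--Schwarz inequality above should produce the claimed term $|P|^{1/2}|L|^{3/4}M^{1/4}$, with the extreme scales (very large or very small $\tau$) absorbed into the terms $|P|$ and $|L|$.

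The main obstacle, and the heart of the Mockenhaupt--Tao argument, is precisely this reassembly step: one must convert the local bound ``at most $M$ lines per plane'' into a global saving without paying the factor $p$ inherent in the pencil of planes through a line. The device that makes it work is that concurrent \emph{pairs}, unlike single incidences, localize to a unique plane, so the plane condition can be applied pair by pair; quantifying this through the rich-plane and rich-point counts at each dyadic level, and verifying that the balance of the two bounds yields exactly the exponent $M^{1/4}$ rather than a larger power, is the delicate part. Tracking the dependence on $M$ throughout the proof of Theorem 8.6 in \cite{MT04} is the promised modification.
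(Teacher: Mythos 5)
Your opening reductions are sound: the Cauchy--Schwarz step $I(P,L)^2\le |P|\bigl(I(P,L)+Q\bigr)$, and the observation that a concurrent pair of distinct lines spans a unique plane, so that $Q=\sum_\pi Q_\pi$ with $|L_\pi|\le M$ for every plane $\pi$. (Minor slip: to bound $|P_\tau|$ you need the dual of what you wrote, namely that two distinct \emph{lines} meet in at most one point, giving $|P_\tau|\binom{\tau}{2}\le\binom{|L|}{2}$, i.e.\ $|P_\tau|\lesssim |L|^2/\tau^2$.) The genuine gap is the reassembly step, which you flag as the heart of the matter but leave as ``should produce the claimed term'': the two bounds you propose to interpolate provably cannot produce the factor $M^{1/4}$. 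Your plane-based estimate says a point $p$ with $t(p)\approx\tau$ forces $\gtrsim(\tau/M)^2$ distinct planes through $p$, each containing at least two lines of $L$ concurrent at $p$. Double-counting these point--plane pairs, a plane $\pi$ contains at most $\binom{m_\pi}{2}$ points where two of its lines cross, so you get $|P_\tau|(\tau/M)^2\lesssim\sum_\pi\binom{m_\pi}{2}$; but the only a priori bound on the total number of coplanar pairs is $\sum_\pi\binom{m_\pi}{2}\le\binom{|L|}{2}$, because the hypothesis caps the pair count \emph{per plane} while the number of participating planes is unbounded in terms of $|P|,|L|,M$ (a single line already lies in $p+1$ planes). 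Thus your plane-based bound reads $|P_\tau|\lesssim M^2|L|^2/\tau^2$, which is \emph{weaker} than the plane-free bound by a factor $M^2$: the interpolation degenerates, and the best the scheme yields is $\sum_\tau\min\bigl(|P|\tau,\,|L|^2/\tau\bigr)\lesssim |P|^{1/2}|L|$, i.e.\ the trivial Cauchy--Schwarz incidence bound with no gain from the plane condition. To succeed at the critical scale $\tau^*\approx|L|/|P|^{1/2}$ you would need something like $|P_\tau|\lesssim|L|^{3/2}M^{1/2}/\tau^2$, and nothing in your argument supplies it.

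The missing idea --- and the actual content of Theorem 8.6 of \cite{MT04}, which the paper invokes (there with the Wolff axiom $M=|L|^{1/2}$, giving the exponent $7/8$; the ``modification'' is merely to carry a general $M$ through) --- is Wolff's hairbrush. One first pigeonholes dyadically on \emph{line} multiplicities (not point multiplicities) to reduce to the case where each line carries $\sim\lambda=I/|L|$ incidences, the extreme classes being absorbed into $|P|+|L|$. Your own pair count then produces a stem line $\ell_0$ meeting $N\gtrsim I^2/(|P||L|)$ other lines of $L$ at points of $P$. These bush lines lie in planes of the pencil through $\ell_0$, at most $M$ per plane, hence in at least $N/M$ distinct planes; and --- this is the structural fact your approach lacks --- two planes of the pencil meet only along $\ell_0$, so their contributions to $P$ are disjoint off the stem. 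Inside one such plane, $k\le M$ lines with $\sim\lambda$ points each and pairwise intersections of size at most one contribute $\gtrsim\min(k,\lambda)\,\lambda$ points, whence
\[
|P|\;\gtrsim\;\frac{N}{M}\,\min(M,\lambda)\,\lambda,
\]
and unwinding (in the regime $\lambda\le M$, say, $|P|^2M\gtrsim I^4/|L|^3$) gives exactly $I\lesssim |P|^{1/2}|L|^{3/4}M^{1/4}+|P|+|L|$. In short: the hypothesis ``at most $M$ lines per plane'' is converted into a global gain only through the disjointness of the pencil of planes through a \emph{single} line, which turns the local bound into a lower bound on $|P|$; localizing concurrent pairs to their unique planes and then counting pairs globally, as you do, is insensitive to $M$ precisely because the global pair count $\binom{|L|}{2}$ carries no memory of the plane condition.
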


The lemma below will allow us to apply the point--line incidence bound of Theorem \ref{lem4.2}. 

\begin{lemma}\label{lem1.0}

Let $A$ be a subset of $\Fbb_q^2 \times \Fbb_q^2$ such that for $(m_1,m_2) \in A$ with $m_1=(u_1,v_1) , m_2=(u_2,v_2)$ we have $u_1,u_2 \ne 0$ and $(v_1,v_2) \ne (0,0)$. 
Assume the image of $A$ under the first projection to $\F_q^2$ does not contain two distinct points on the same line through the origin. 
Then the map $f \colon \SL_2(\Fbb_q) \rar \Fbb_q^3$, $\begin{pmatrix}
    a & b \\
    c & d
\end{pmatrix}\mapsto (a,d,c)$ satisfies the following properties. 
\begin{itemize}
    \item[(i)]  For any $(m_1,m_2)\in A$, the set 
    \[ \ell_{m_1 \rar m_2}:= f \l( \lo \theta \in \SL_2 (\Fbb_q) \colon \theta m_1 =m_2 \ro \r) \]
    is a line in $\Fbb_q^3$;
    \item[(ii)]  for $(m_1,m_2)\neq (m_1',m_2')$, we have $\ell_{m_1\rar m_2}\neq \ell_{m_1'\rar m_2'}$;
    \item[(iii)] $f$ is injective on the set $\SL_2(\mathbb{F}_q)\setminus\{\theta\colon \theta(e_1)=e_1\}$, here $e_1$ is the line defined by $y=0$.
\end{itemize}

\end{lemma}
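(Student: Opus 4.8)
The plan is to prove the three claims essentially separately, handling (iii) and (i) by direct linear algebra over $\Fbb_q$ and then deriving (ii) from (iii) together with the hypothesis on the first projection. I would begin with (iii). Writing $\theta=\begin{pmatrix} a & b \\ c & d\end{pmatrix}$, the line $e_1=\{(x,0)\}$ is mapped to $\{(ax,cx)\}$, so $\theta(e_1)=e_1$ holds precisely when $c=0$; thus $\SL_2(\Fbb_q)\setminus\{\theta\colon \theta(e_1)=e_1\}$ is the set where $c\ne 0$. On this set $f$ is injective, since the relation $ad-bc=1$ recovers the forgotten entry as $b=(ad-1)/c$ from the image $(a,d,c)$. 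This is (iii).

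For (i), fix $(m_1,m_2)\in A$ with $m_1=(u_1,v_1)$ and $m_2=(u_2,v_2)$. A matrix $\theta$ lies in the fiber $\{\theta\colon\theta m_1=m_2\}$ exactly when $au_1+bv_1=u_2$, $cu_1+dv_1=v_2$, and $ad-bc=1$. The middle equation is already a linear condition $u_1c+v_1d=v_2$ on $(a,d,c)$, and a second one is produced by eliminating $b$ through the identity $v_2a-u_2c=(cu_1+dv_1)a-(au_1+bv_1)c=v_1(ad-bc)=v_1$. I would then check the converse: every $(a,d,c)$ with $v_2a-u_2c=v_1$ and $u_1c+v_1d=v_2$ lifts to a genuine fiber element by solving for $b$, a short computation confirming that the determinant equals $1$. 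Hence $\ell_{m_1\to m_2}$ is the common zero set of the affine forms $v_2a-u_2c-v_1$ and $u_1c+v_1d-v_2$, whose linear parts $(v_2,0,-u_2)$ and $(0,v_1,u_1)$ are independent because $u_1,u_2\ne 0$ and $(v_1,v_2)\ne(0,0)$; so this zero set is a line.

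The substance of the lemma is (ii), where the point is to recognize why the hypothesis on the first projection is exactly what is needed. Indeed, the fibers of $(m_1,m_2)$ and of $(\lambda m_1,\lambda m_2)$ coincide as subsets of $\SL_2(\Fbb_q)$, so $f$ sends them to the same line; the hypothesis forbids two distinct points $m_1,\lambda m_1$ of a common line through the origin from both lying in the first projection, which is precisely this obstruction. To carry this out, suppose $\ell_{m_1\to m_2}=\ell_{m_1'\to m_2'}=:\ell$. All but at most one point of $\ell$ have $c\ne 0$, so I may choose two distinct such points; by (iii) each has a unique $f$-preimage, which therefore lies in both fibers. This yields two distinct matrices $\theta_1,\theta_2$ with $\theta_i m_1=m_2$ and $\theta_i m_1'=m_2'$ for $i=1,2$. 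Then $\theta_2^{-1}\theta_1$ is a non-identity element fixing both $m_1$ and $m_1'$; since a non-identity element of $\SL_2(\Fbb_q)$ cannot fix two linearly independent vectors, $m_1$ and $m_1'$ lie on a common line through the origin. The first-projection hypothesis then forces $m_1=m_1'$, and hence $m_2=\theta_1 m_1=\theta_1 m_1'=m_2'$, contradicting $(m_1,m_2)\ne(m_1',m_2')$.

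I expect the main obstacle to be exactly this scaling ambiguity in (ii): from the line $\ell$ one can recover the data $(u_1,v_1,u_2,v_2)$ only up to a common scalar, so a purely algebraic recovery of the defining data does not separate the pairs, and the combinatorial hypothesis on the first projection must be invoked to break the tie. Routing this through the injectivity statement (iii), rather than through an explicit reconstruction of the equations of $\ell$, keeps the argument uniform and avoids the degenerate cases $v_1=0$ or $v_2=0$.
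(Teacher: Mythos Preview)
Your argument is correct. The paper's proof is extremely terse: it writes down an explicit parametrisation of $\ell_{m_1\to m_2}$ with parameter $b$,
\[
(a,d,c)=\Bigl(\tfrac{u_2}{u_1},\tfrac{u_1}{u_2},\tfrac{v_2}{u_1}-\tfrac{v_1}{u_2}\Bigr)+b\Bigl(-\tfrac{v_1}{u_1},\tfrac{v_2}{u_2},-\tfrac{v_1v_2}{u_1u_2}\Bigr),
\]
and then simply says ``one checks'' the remaining properties. You instead describe the line implicitly as the intersection of the two affine hyperplanes $v_2a-u_2c=v_1$ and $u_1c+v_1d=v_2$, which is equivalent but avoids dividing by $u_2$ in the verification of (i). More substantively, for (ii) the paper leaves the verification to the reader, while you supply a clean conceptual argument: lift two points of $\ell$ with $c\neq 0$ to matrices via the injectivity in (iii), observe that $\theta_2^{-1}\theta_1$ is a non-identity element fixing both $m_1$ and $m_1'$, and conclude linear dependence, at which point the first-projection hypothesis finishes. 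This makes transparent exactly why that hypothesis is needed (to kill the scaling ambiguity $(m_1,m_2)\sim(\lambda m_1,\lambda m_2)$), which the paper's bare parametric formula does not. One small caveat: your step ``choose two distinct such points'' uses $q\ge 3$, since when $v_1v_2\ne 0$ exactly one of the $q$ points of $\ell$ has $c=0$; this is harmless for the applications in the paper but is worth noting if you want the lemma literally for all $q$.
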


\begin{proof}
Let $(m_1,m_2)\in A$. Write $m_1=(u_1,v_1)$ and $m_2=(u_2,v_2)$. The set of all $\theta=\begin{pmatrix}
    a & b \\
    c & d
\end{pmatrix} \in \SL_2 \l( \Fbb_q \r)$ such that $\theta m_1 =m_2$ is described as the solution set of the following system of three equations
\begin{equation*}
    \begin{cases}
    au_1 +bv_1 =u_2, \\
    cu_1 +dv_1 =v_2, \\
    ad-bc =1.
\end{cases}
\end{equation*}

Solving this system, we have
\[ \begin{bmatrix}
    a \\
    d \\
    c
\end{bmatrix} = \begin{bmatrix}
    \frac{u_2}{u_1} \\
    \frac{u_1}{u_2} \\
    \frac{v_2}{u_1} -\frac{v_1}{u_2}
\end{bmatrix} + b \begin{bmatrix}
    -\frac{v_1}{u_1} \\
    \frac{v_2}{u_2}  \\
    -\frac{v_1v_2}{u_1u_2}
\end{bmatrix},\]
for $b \in \Fbb_q$. Now one checks that $f$ satisfies the required properties. 
\end{proof}

\begin{lemma}\label{Lem:on_2_lines_|R_E|_leq_|E|}
Let $E\subset\F_q^2$. Suppose there are exactly $2$ lines through the origin that intersect $E-\{(0,0)\}$. Then 
$|R_E|\leq |E|$. 
\end{lemma}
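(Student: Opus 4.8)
The plan is to exploit that every $\theta\in\SL_2(\Fbb_q)$ is linear, hence fixes the origin and sends lines through the origin to lines through the origin. Let $L_1,L_2$ be the two lines through the origin meeting $E-\{(0,0)\}$; then $E\setminus\{(0,0)\}\subseteq L_1\cup L_2$ with $L_1\cap L_2=\{(0,0)\}$, so $E\setminus\{(0,0)\}$ is the disjoint union of $E\cap(L_1\setminus\{(0,0)\})$ and $E\cap(L_2\setminus\{(0,0)\})$. Any $\theta\in R_E$ satisfies $\theta(E\setminus\{(0,0)\})=E\setminus\{(0,0)\}$, so it must permute the set $\{L_1,L_2\}$. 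This gives a homomorphism from $R_E$ to the group $\Zbb/2\Zbb$ recording whether $\theta$ fixes or swaps the two lines; its kernel $R_E^0$ (the elements fixing each of $L_1,L_2$ as a set) has index at most $2$, so $|R_E|\leq 2|R_E^0|$. Handling the swap case abstractly this way avoids any explicit description of the anti-diagonal matrices.

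Next I would normalize coordinates: choose $g\in\operatorname{GL}_2(\Fbb_q)$ carrying $L_1,L_2$ to the two coordinate axes and replace $E$ by $gE$. This changes neither $|E|$ nor $|R_E|$, since $R_{gE}=gR_Eg^{-1}$ and conjugation by $g$ preserves $\SL_2(\Fbb_q)$. So I may assume $L_1,L_2$ are the axes. A linear map fixing both axes is diagonal, hence $R_E^0$ consists exactly of the matrices $\operatorname{diag}(a,a^{-1})$ with $a\in\Fbb_q^\ast$ for which $\operatorname{diag}(a,a^{-1})(E)=E$.

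Then I would read off the constraint on $a$ combinatorially. Writing $S_1=\{x\in\Fbb_q^\ast:(x,0)\in E\}$ and $S_2=\{y\in\Fbb_q^\ast:(0,y)\in E\}$, the condition $\operatorname{diag}(a,a^{-1})(E)=E$ is equivalent to $aS_1=S_1$ and $aS_2=S_2$. The set $H_i=\{a\in\Fbb_q^\ast:aS_i=S_i\}$ is a subgroup of $\Fbb_q^\ast$, and since $S_i$ is a nonempty union of cosets of $H_i$, we get that $|H_i|$ divides $|S_i|$, so $|H_i|\leq|S_i|$. As $R_E^0$ corresponds to $H_1\cap H_2$, this yields $|R_E^0|\leq\min(|S_1|,|S_2|)$.

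Finally, assembling the pieces gives
\[
|R_E|\leq 2|R_E^0|\leq 2\min(|S_1|,|S_2|)\leq |S_1|+|S_2|=|E\setminus\{(0,0)\}|\leq|E|,
\]
as desired. I do not expect a serious obstacle here; the argument is short. The only points meriting care are confirming that conjugation by the normalizing $g$ keeps $R_E$ inside $\SL_2(\Fbb_q)$ while scaling $|R_E|$ and $|E|$ correctly, and recording that both $S_1,S_2$ are nonempty (guaranteed by the hypothesis that each line meets $E-\{(0,0)\}$), so that the divisibility bounds $|H_i|\le |S_i|$ are valid.
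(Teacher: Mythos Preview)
Your proof is correct and follows essentially the same skeleton as the paper's: normalize so that the two lines are the coordinate axes, observe that any $\theta\in R_E$ is either diagonal or anti-diagonal, and bound $|R_E|$ by $|S_1|+|S_2|\le|E|$.

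The execution differs slightly. The paper fixes a single point $m\in E_1$ and counts: among diagonal matrices, the image $\theta m\in E_1$ determines $\theta$ uniquely, giving $\le|E_1|$ such matrices; among anti-diagonal matrices, the image $\theta m\in E_2$ determines $\theta$, giving $\le|E_2|$. You instead pass through the index-$2$ subgroup $R_E^0$ and use the multiplicative stabilizers $H_i$, obtaining the sharper intermediate bound $|R_E^0|\le\min(|S_1|,|S_2|)$ (indeed $|R_E^0|$ divides $\gcd(|S_1|,|S_2|)$) before concluding via $2\min\le|S_1|+|S_2|$. Both reach the same endpoint; your route is a touch more structural, the paper's a touch more direct. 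One tiny gloss in your write-up: the condition for $\operatorname{diag}(a,a^{-1})$ to preserve $E$ is literally $aS_1=S_1$ and $a^{-1}S_2=S_2$, but of course $a^{-1}S_2=S_2\iff aS_2=S_2$, so your statement is correct.
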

\begin{proof}
Since $\SL_2(\F_q)$ acts doubly transitive on the set of lines through the origin, we can assume without loss of generality that the two lines are the two coordinate axes $e_1$ and $e_2$. Denote by $E_1$ and $E_2$ the intersections of $E-\{(0,0)\}$ with $e_1$ and $e_2$, respectively. Fix a point $m=(x,0)$ on $E_1$.  

A matrix $\theta\in\SL_2(\F_q)$ that sends $e_1\mapsto e_1$ and $e_2\mapsto e_2$ has the form $\theta=\begin{pmatrix}a&0\\0&d\end{pmatrix}$. 
Fixing the image $\theta m$ on $E_1$ uniquely determines $a$, hence also $d$. Thus, the number of $\theta$'s that preserve $E_1$ and $E_2$ is at most $|E_1|$.

Next, a matrix $\theta\in\SL_2(\F_q)$ that swaps
$e_1$ and $e_2$ has the form $\theta=\begin{pmatrix}0&b\\c&0\end{pmatrix}$. 
Fixing the image $\theta m$ on $E_2$ uniquely determines $c$, hence also $b$. Thus, the number of $\theta$'s that swap $E_1$ and $E_2$ is at most $|E_2|$.

Altogether, $|R_E|\leq |E_1|+|E_2|=|E|$. 
\end{proof}

\begin{lemma}
     Let $\mathcal{L}$ be a set of $m\geq 3$ lines through the origin in $\F_q^2$. Then the number of matrices $\theta$ in $\SL_2(\F_q)$ such that $\theta(\mathcal{L})=\mathcal{L}$ is 
     $\le 2\cdot m^3(m-1)^2$.
     \label{Lem: O(1) lines}
\end{lemma}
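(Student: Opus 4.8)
The plan is to exploit the rigidity of the $\SL_2(\F_q)$-action on the set of lines through the origin, which we identify with $\mathbb{P}^1(\F_q)$: a matrix fixing three such lines must be central. Precisely, I would first establish the claim that if $\psi\in\SL_2(\F_q)$ fixes three distinct lines through the origin, then $\psi=\pm I$. Indeed, a line through the origin fixed by $\psi$ is spanned by an eigenvector of $\psi$, hence is an eigenline; and a non-scalar $2\times 2$ matrix has at most two eigenlines (two if it is diagonalizable with distinct eigenvalues in $\F_q$, one if it is a single Jordan block, none if its characteristic polynomial is irreducible). So fixing three distinct lines forces $\psi$ to be scalar, say $\psi=\lambda I$, and the constraint $\det\psi=\lambda^2=1$ then gives $\psi=\pm I$.

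With this in hand, fix three distinct lines $\ell_1,\ell_2,\ell_3\in\mathcal{L}$, which is possible since $m\geq 3$, and consider the map
\[
\Phi\colon \{\theta\in\SL_2(\F_q)\colon \theta(\mathcal{L})=\mathcal{L}\}\longrightarrow \mathcal{L}\times\mathcal{L}\times\mathcal{L},\qquad \theta\mapsto(\theta\ell_1,\theta\ell_2,\theta\ell_3).
\]
Since such a $\theta$ permutes $\mathcal{L}$, the three images are distinct elements of $\mathcal{L}$, so the image of $\Phi$ consists of triples of distinct lines and has size at most $m(m-1)(m-2)$. Moreover, if $\Phi(\theta)=\Phi(\theta')$ then $\theta'^{-1}\theta$ fixes each of $\ell_1,\ell_2,\ell_3$, so by the claim $\theta'^{-1}\theta=\pm I$, i.e.\ $\theta'=\pm\theta$; hence every nonempty fiber of $\Phi$ has at most two elements. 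Combining the two bounds gives
\[
\left|\{\theta\in\SL_2(\F_q)\colon \theta(\mathcal{L})=\mathcal{L}\}\right|\leq 2\,m(m-1)(m-2)\leq 2\,m^3(m-1)^2,
\]
which is the desired estimate (in fact a slightly stronger one).

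The only step requiring genuine care is the rigidity claim, and the crucial point there is that the determinant-one constraint is exactly what cuts the central scalars down to $\pm I$; everything else is bookkeeping for the counting map $\Phi$. I expect no further obstacle, since the hypothesis $m\geq 3$ is precisely what supplies the three lines needed to feed into $\Phi$ and thereby pin down each stabilizing matrix up to sign.
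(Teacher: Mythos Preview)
Your proof is correct and in fact cleaner than the paper's. You use the standard rigidity fact that $\mathrm{PGL}_2$ acts sharply $3$-transitively on $\mathbb{P}^1$: an element of $\SL_2(\F_q)$ fixing three distinct lines through the origin must be $\pm I$. This immediately gives the bound $2m(m-1)(m-2)$, which is strictly stronger than the stated $2m^3(m-1)^2$.

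The paper argues differently. It normalizes so that the coordinate axes $e_1,e_2$ lie in $\mathcal{L}$, and then for $\theta=\begin{pmatrix}a&b\\c&d\end{pmatrix}$ records the ratios $c/d,\ a/b,\ a/c,\ b/d$ determined respectively by the preimages of $e_1,e_2$ and the images of $e_1,e_2$. In the generic case this pins down $\theta$ up to a scalar, and $\det\theta=1$ leaves at most two choices; in the degenerate cases $a=d=0$ or $b=c=0$ one fixes the image of a third line to recover the ratio $a/d$. This entry-by-entry bookkeeping yields the cruder bound $2m^3(m-1)^2$. Your argument replaces all of this with the single conceptual observation about eigenlines, avoids the case split, and sharpens the bound; the paper's approach has the minor advantage of being entirely self-contained at the level of matrix arithmetic, but yours is both shorter and more transparent.
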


\begin{proof}
As in the previous lemma, we can assume that $\mathcal{L}$ contains the two coordinate lines $e_1$ and $e_2$. Consider a matrix $\theta=\begin{pmatrix} a &b\\c& d\end{pmatrix}$ in $\SL_2(\F_q)$ that acts on $\mathcal{L}$. Deciding which element in $\mathcal{L}$ goes under $\theta$ to $e_1$ amounts to fixing the ratio $c/d$ in $\F_q\cup\{\infty\}$. Similarly, deciding which element in $\mathcal{L}$ goes under $\theta$ to $e_2$ amounts to fixing the ratio $a/b$. Deciding where $\theta$ sends $e_1$ and $e_2$ determines the ratios $a/c$ and $b/d$.

Therefore, unless $a=d=0$ or $b=c=0$, the ratio $a/d$ is also determined by the choices we have made; then $\theta=\lambda\theta_0$ for a fixed matrix $\theta_0$ and a scalar $\lambda\in\F_q$. The condition $\det(\theta)=1$ now gives at most two possibilities for $\lambda$. If $a=d=0$ or $b=c=0$, we decide where $\theta$ sends an element in $\mathcal{L}-\{e_1,e_2\}$; this will determine $a/d$, and we finish as in the previous case.     
\end{proof}

\begin{proposition}
Let $E\subset \Fbb_q^2$. Let $R_E$ be the set of matrices $\theta$ in $\SL_2(\Fbb_p)$ such that $\theta (E) = E$.
Let  $m_0$ be the number of lines passing through the origin that intersect 
$E-\{(0,0)\}$ at exactly $m_1$ points (for a fixed $m_1\geq 1$). With the constant $c$ taken from Theorem \ref{lem4.2}, assume that $m_0>8+4c$. Then 
    \[|R_E|\le 16c^2(m_0m_1)^{3/2}.\]
In particular, $|R_E|\leq 16c^2|E|^{3/2}$.     
    \label{Prop:main_bound_set_of_pointss}
\end{proposition}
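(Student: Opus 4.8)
The plan is to set up a point--line incidence problem in $\Fbb_q^3$ where the lines encode the matrices of $R_E$, and then apply the Mockenhaupt--Tao bound of \Cref{lem4.2}. First I would let $\mathcal{L}_0$ be the set of $m_0$ lines through the origin each meeting $E-\{(0,0)\}$ in exactly $m_1$ points. After applying a matrix from $\SL_2(\Fbb_q)$ (which acts doubly transitively on lines through the origin), I may assume $e_1$ lies in $\mathcal{L}_0$, and crucially I may assume $R_E$ fixes $e_1$ up to a bounded loss: indeed each $\theta\in R_E$ permutes the lines in $\mathcal{L}_0$, so the subgroup fixing $e_1$ has index at most $m_0$, and I will handle the general $R_E$ by multiplying the final bound by $m_0$ (or, more carefully, by choosing a single target line that is hit by a positive proportion of $R_E$). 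From each such line I would select one point $m_1^{\ast}=(u_1,v_1)$ with $u_1\neq 0$ off the $e_2$-axis, so the collection $A$ of pairs $(m,\theta m)$ for $m$ ranging over the chosen representatives and $\theta\in R_E$ satisfies the hypotheses of \Cref{lem1.0}: the first coordinates $u_1$ are nonzero, and the representatives, lying on distinct lines through the origin, never share a line through the origin.

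Next I would invoke \Cref{lem1.0} to pass from matrices to lines in $\Fbb_q^3$. For a fixed source point $m$ on one of the $m_0$ chosen lines and each image point $m'=\theta m$ (which again lies on one of the $m_0$ lines, with $m_1$ choices per line), the set $\{\theta\in\SL_2:\theta m=m'\}$ maps under $f$ to a single line $\ell_{m\rar m'}$, and by part (ii) these lines are distinct for distinct pairs $(m,m')$. So I take $L=\{\ell_{m\rar m'}\}$ to be this family of lines, of size $|L|\approx (m_0 m_1)^2$ counting all source/target pairs among the chosen representatives, and $P=f(R_E)$ to be the image point set, with $|P|\approx |R_E|$ by the injectivity in part (iii) (the matrices fixing $e_1$ having been quotiented out). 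Each matrix $\theta\in R_E$ contributes an incidence to $\ell_{m\rar\theta m}$ for every chosen representative $m$, giving $I(P,L)\gtrsim |R_E|\cdot m_0$ incidences.

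The main obstacle, and the place where the constant $m_0>8+4c$ threshold enters, is verifying the planarity hypothesis of \Cref{lem4.2}: I must show that no plane in $\Fbb_q^3$ contains too many of the lines $\ell_{m\rar m'}$, so that I may take $M$ bounded (ideally $M\ll m_0 m_1$ or smaller). This requires understanding the geometry of the image lines under the coordinate map $(a,b,c,d)\mapsto(a,d,c)$ and translating a coplanarity relation back into an algebraic constraint on the source/target points; I expect a plane to correspond to a linear relation forcing the points $m,m'$ into a restricted family, bounding the number of coplanar lines. Granting a suitable bound on $M$, I would feed $|P|\approx|R_E|$, $|L|\approx(m_0m_1)^2$, and the chosen $M$ into \Cref{lem4.2}, compare against the lower bound $I(P,L)\gtrsim |R_E|m_0$, and solve the resulting inequality
\[
|R_E|\,m_0 \;\ll\; |R_E|^{1/2}(m_0m_1)^{3/2}M^{1/4}+|R_E|+(m_0m_1)^2
\]
for $|R_E|$. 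The condition $m_0>8+4c$ should guarantee that the dominant term on the right is the first one (the other two being absorbable), and isolating $|R_E|^{1/2}$ then yields $|R_E|\ll (m_0m_1)^{3/2}$ with the explicit constant $16c^2$, after which the bound $m_0m_1\leq|E|$ gives $|R_E|\le 16c^2|E|^{3/2}$.
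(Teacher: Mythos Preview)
Your overall strategy matches the paper's: encode $R_E$ as a point set $P=f(R_E)\subset\F_q^3$ via \Cref{lem1.0}, build a line set $L$ from source/target pairs, lower-bound incidences by $\approx m_0|R_E|$, and compare with \Cref{lem4.2}. But two execution errors prevent the argument from closing.

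First, your line count $|L|\approx(m_0m_1)^2$ is wrong, and with that value your displayed inequality does \emph{not} solve to $|R_E|\ll(m_0m_1)^{3/2}$ (try it: with $M=O(1)$ you get $|R_E|\ll m_0 m_1^3$). With one source representative per line (as you set up) and all $m_0m_1$ possible targets, the correct count is $|L|\le m_0\cdot(m_0m_1)=m_0^2m_1$; this is what the paper uses. Second, and more critically, the planarity parameter $M$ is not a technicality to be hoped for but the hinge of the numerics. The paper proves $M\le 2m_0$ by showing, from the explicit parametrization in \Cref{lem1.0}, that for a fixed source $u$: (i) if $v,w$ lie on different lines through the origin then $\ell_{u\to v},\ell_{u\to w}$ are skew; and (ii) any three lines $\ell_{u\to v_1},\ell_{u\to v_2},\ell_{u\to v_3}$ with the $v_j$ on one line through the origin are parallel but not coplanar. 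Hence each of the $\le m_0$ source points contributes at most $2$ lines to any plane. Plugging $|L|\le m_0^2m_1$ and $M\le 2m_0$ into \Cref{lem4.2} gives the main term $c\,m_0^{7/4}m_1^{3/4}|R_E|^{1/2}$ against the lower bound $(m_0-4)|R_E|$, and $m_0>8+4c$ absorbs the $|R_E|$ and $|L|$ terms to yield $|R_E|\le 16c^2(m_0m_1)^{3/2}$.

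A smaller point: your handling of injectivity in \Cref{lem1.0}(iii) is backwards. The map $f$ is injective on the \emph{complement} of the stabilizer of $e_1$, so passing to the subgroup fixing $e_1$ is precisely where $|P|$ collapses. The paper instead splits $S=S_1\sqcup S_2$ with $S_2=\{\theta:\theta e_1=e_1\}$, bounds the $S_2$-contribution to the incidence count trivially by $|B||C|\le m_0^2m_1$ (for fixed $(u_1,u_2)$ at most one upper-triangular $\theta$ sends $u_1\mapsto u_2$), and applies Mockenhaupt--Tao only to $f(S_1)$, where $f$ is genuinely injective.
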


\begin{proof}
Replace $E$ by $E-\{(0,0)\}$ to assume without loss of generality that $(0,0)\notin E$.
Let $\ell_1,\dots,\ell_{m_0}$ be all the lines through the origin such that $|\ell_j\cap E|=m_1$. Let $E_j=\ell_j\cap E$ for $j=1,\dots,m_0$. 

Let $S$ be the set of matrices $\theta \in \SL_2(\Fbb_q)$ such that $\theta (E_j) \in \lo E_1 , \ldots ,E_{m_0}\ro , \, \forall j=1,\ldots ,m_0$. Then $R_E \subset S$. We will estimate $|S|$.

For $j=1,\dots,m_0$, fix a point $a_j\in E_j$. Let 
$B=\{a_1,\dots,a_{m_0}\}-(e_1\cup e_2)$ and $C=\left(\bigcup_{j=1}^{m_0}E_j\right)-(e_1\cup e_2)$.

Let \[\Omega=\{(u_1,u_2,\theta)\in B\times C\times S\ |\ \theta u_1= u_2\}.\]
Let $\theta\in S$. For any $u_1\in B$ such that $\theta u_1\notin e_1\cup e_1$, we have
$(u_1,\theta u_1,\theta)\in \Omega$. Therefore \begin{equation}
|\Omega|\geq (m_0-4)|S|.
\label{Eq:lower_bound_Omega}    
\end{equation}

Set $S_1=\{\theta\in S\ |\ \theta e_1\neq e_1\}$ and 
$S_2=\{\theta\in S\ |\ \theta e_1=e_1\}$. For a fixed $(u_1,u_2)\in B\times C$, there exists at most one $\theta=\begin{pmatrix}a&b\\0 &a^{-1}\end{pmatrix}\in S_2$ such that $\theta u_1=u_2$. Therefore $|\Omega\cap (B\times C\times S_2)|\leq |B||C|\leq m_0^2m_1$. 

We now estimate $|\Omega\cap (B\times C\times S_1)|$. Apply Lemma 
\ref{lem1.0} with $A=B\times C$. Let
\[L=\{\ell_{u_1\to u_2}\ |\ u_1\in B, u_2\in C\};\]
this is a set of lines in $\F_q^3$ with $|L|\leq m_0^2m_1$. 
Notice that $|\Omega\cap (B\times C\times S_1)|$ equals the number of incidences between
the lines in $L$ and the points in $f(S_1)$. 

We will see below that any plane in $\Fbb_q^3$ contains at most $2m_0$ lines from $L$. 
Once this is established, Theorem \ref{lem4.2} implies
    \[ I(L,f(S_1)) \le c\left(|L|^{3/4}|S_1|^{1/2}(2m_0)^{1/4} + |L|+|f(S_1)|\right),\]
yielding the upper bound
\begin{equation}    
|\Omega|
     \le 2c\left(m_0^{7/4}m_1^{3/4}|S|^{1/2}+ m_0^2m_1 +|S|\right).
\label{Eq:upper_bound_Omega}
\end{equation}     
Comparing the lower and upper bounds (\ref{Eq:lower_bound_Omega}) and (\ref{Eq:upper_bound_Omega}) for $|\Omega|$ under $m_0>8+4c$, we obtain 
$|S|\le 16c^2m_0^{3/2}m_1^{3/2}$, completing the proof. 

We now turn to verifying the technical condition that any plane in $\Fbb_q^3$ contains at most $2m_0$ lines from $L$. 

  First, we observe that $\ell_{u\rar v}$ and $\ell_{u\rar w}$ (elements of $L$) are not in the same plane, for any $v,w$ not in the same line passing through the origin. 
Let $u=(x_0,y_0), v=(x_1,y_1),w=(x_2,y_2)$. Then, by the proof of Lemma \ref{lem1.0}, we have 
\[ \ell_{u\rar v} = \lo  \l( \frac{x_1}{x_0},\frac{x_0}{x_1}, \frac{y_1}{x_0}-\frac{y_0}{x_1} \r) + t \l( -\frac{y_0}{x_0} , \frac{y_1}{x_1}, -\frac{y_0y_1}{x_0x_1} \r) \colon t \in \Fbb_q \ro ,\]
and
\[ \ell_{u\rar w} = \lo  \l( \frac{x_2}{x_0},\frac{x_0}{x_2}, \frac{y_2}{x_0}-\frac{y_0}{x_2} \r) + t \l( -\frac{y_0}{x_0} , \frac{y_2}{x_2}, -\frac{y_0y_2}{x_0x_2} \r) \colon t \in \Fbb_q \ro .\]
Since $v$ and $w$ lie in two distinct lines passing through the origin, we have $\frac{y_1}{x_1} \ne \frac{y_2}{x_2} $. Therefore, the line $\ell_{u\rar v}$ is not parallel to the line $\ell_{u\rar w}$. Moreover, since there exists no $\theta \in \SL_2(\Fbb_q)$ such that $\theta (u) =v, \theta (u)=w$, we have $\ell_{u\rar v}\cap \ell_{u\rar w} =\emptyset$. Therefore, $\ell_{u\rar v}$ and $\ell_{u\rar w}$ are neither parallel nor intersect, which means that $\ell_{u\rar v},\ell_{u\rar w} $ are not in the same plane. 
    
Moreover, we will show that for any $u\in B$, any 3 distinct lines $\ell_{u\rar v_1},\ell_{u\rar v_2},\ell_{u\rar v_3}$ in $L$ are not in the same plane for any $v_1,v_2,v_3$ in a line passing through the origin. We use the parametric formulas for the $\ell_{u\rar v_i}$ given in the proof of Theorem \ref{lem4.2}. Note that these $3$ lines are parallel to each other. Take a point $Z$ on one of them other than its pinpoint. A simple determinant computation shows that $Z$ and the three pinpoints do not lie in the same plane.

Now, let $W$ be a plane in $\F_q^3$. For any $a\in\{a_1,\dots,a_{m_0}\}-e_1\cup e_2$, the plane $W$ can contain at most $2$ lines of the form $\ell_{a\rar u_2}$. Therefore $W$ contains at most $2m_0$ lines from $L$.     
\end{proof}

\begin{proof}[Proof of Theorem \ref{Prop:main_bound_set_of_points}]
We denote the set of lines passing through the origin and intersecting the set $E-\{(0, 0)\}$ by $\mathcal{L}$.
If $|\mathcal{L}|=2$, the statement follows from Lemma
\ref{Lem:on_2_lines_|R_E|_leq_|E|}, so assume $|\mathcal{L}|\geq 3$. 
We partition $\mathcal{L}$ into subsets $\mathcal{L}=\bigcup_{i}\mathcal{L}_i$, where each set $\mathcal{L}_i$ is a union of $m_0^{(i)}$ lines and each contains exactly $m_1^{(i)}$ points from $E$. We now fall into the following cases:

{\bf Case 1:} If there exists $i$ such that $|\mathcal{L}_i|\ge 8+4c$, then the theorem follows directly from Proposition \ref{Prop:main_bound_set_of_pointss}. 

{\bf Case 2:} If there exists $i$ such that $3\le |\mathcal{L}_i|\le 8+4c$, then the theorem follows directly from Lemma \ref{Lem: O(1) lines}. 

{\bf Case 3:} If for each $i$, the cardinality of $\mathcal{L}_i$ is either $1$ or $2$. Since $|\mathcal{L}|\ge 3$, we can take a union of $2$ or $3$ of the $\mathcal{L}_i$'s that consists of $3$ or $4$ lines and is preserved by $R_E$. Then Lemma \ref{Lem: O(1) lines} gives the upper bound $|R_E|\le 2\times 4^5$. 

Putting these cases together, the proof is complete.
\end{proof}

\begin{proof}[Proof of Theorem \ref{main-thm}]
Apply Theorem \ref{Prop:main_bound_set_of_points}: let $C$ be such that whenever $E$ is a subset of $\F_q^2$ such that $E-\{(0,0)\}$ intersects at least $2$ lines through the origin, we have $|R_E|\leq C|E|^{3/2}$. 

Pick $c_2>C.c_1^{3/2}$.

Let $E$ be a subset of $\F_q^2$ with $|E|\leq c_1q^{\alpha}$. Suppose $|R_E|\geq c_2q^{\beta}$, where $\beta\geq 3\alpha/2$. 

Suppose that $E-\{(0,0)\}$ intersects at least $2$ lines through the origin. Then 
\[c_2q^{\beta}\leq |R_E|\leq C|E|^{3/2}\leq C(c_1q^{\alpha})^{3/2}=Cc_1^{3/2}q^{3\alpha/2},\]
which is a contradiction.
\end{proof}

\section{Proof of Proposition \ref{Prop:gp_theory}}

Suppose $|R_E|>p^{r-1}|E|$. By changing coordinates, we may assume that $e\colonequals \begin{pmatrix}
 1\\0   
\end{pmatrix}$ is in $E$. The stabilizer of $e$ under the action of $\SL_2(\F_q)$ on $\F_q^2$ is the Borel subgroup
\[B\colonequals\text{Stab}_{\text{SL}_2(\F_q)}(e)=\left\{\begin{pmatrix}1 & \alpha\\0 & 1\end{pmatrix}\ :\ \alpha\in\F_q\right\}\]
of $\SL_2(\F_q)$. In particular, $|B|=q=p^r$. 

Let $x\in E-\{(0,0)\}$. Applying the orbit-stabilizer theorem to the action of $R_E$ on $E$, we can write
\[p^{r-1}|E|<|R_E|=|R_E x|\cdot|\text{Stab}_{R_E}(x)|\leq |E|\cdot|\text{Stab}_{R_E}(x)|\]
to conclude $p^{r-1}<|\text{Stab}_{R_E}(x)|$. 
However, $\text{Stab}_{R_E}(x)$ is a subgroup of $\text{Stab}_{\SL_2(\F_q)}(x)$, which is a conjugate of $B$ and hence of size $p^r$.  Therefore $\text{Stab}_{R_E}(x)=\text{Stab}_{\SL_2(\F_q)}(x)$.

By the assumption on $E$, there exists a point $y=\begin{pmatrix}
u\\v    
\end{pmatrix}$ in $E$ with $v\neq 0$. Since $B\subset R_E$, all points
\[\begin{pmatrix}
    1 &\alpha\\ 0 &1
\end{pmatrix}\begin{pmatrix}
    u\\v
\end{pmatrix}=\begin{pmatrix}
    u+\alpha v\\v
\end{pmatrix}\]
as $\alpha\in \F_q$ belong to $E$. The lines through the origin that contain those points are all distinct. Therefore, $E$ intersects all lines through the origin in $\F_q^2$ except possibly the $y$-axis. 

For $\beta\in\F_q$, set $e_\beta=\begin{pmatrix}
    1\\ \beta
\end{pmatrix}$ and pick a point $y_\beta\in E\cap\text{span}_{\F_q}(e_\beta)$ with 
$y_\beta\neq \begin{pmatrix}
    0\\0
\end{pmatrix}$. Then
\begin{align*}
\text{Stab}_{R_E}(y_\beta)=\text{Stab}_{\SL_2(\F_q)}(y_\beta)
&=\text{Stab}_{\SL_2(\F_q)}(e_\beta)\\
&=\begin{pmatrix}
1& 0\\ \beta & 1    
\end{pmatrix}B\begin{pmatrix}
1 &0\\ -\beta & 1    
\end{pmatrix}\\
&=\left\{\begin{pmatrix}
1 & 0\\ \beta &1   
\end{pmatrix}\begin{pmatrix}
1 &\alpha\\ 0 & 1    
\end{pmatrix} 
\begin{pmatrix}
    1 &0 \\ -\beta & 1
\end{pmatrix} : \alpha\in\F_q\right\}\\
&=\left\{\begin{pmatrix}
1-\alpha\beta & \alpha \\ -\beta^2\alpha & \alpha\beta+1    
\end{pmatrix} : \alpha\in\F_q\right\}.
\end{align*}

Therefore, for each $\alpha, \beta\in \F_q$, the matrix $\begin{pmatrix}
1-\alpha\beta & \alpha \\ -\beta^2\alpha & \alpha\beta+1    
\end{pmatrix}$ belongs to $R_E$. 
Therefore, the point
\[\begin{pmatrix}
1-\alpha\beta & \alpha \\ -\beta^2\alpha & \alpha\beta+1    
\end{pmatrix}\begin{pmatrix}
    1\\0
\end{pmatrix}=\begin{pmatrix}
    1-\alpha\beta\\ -\beta^2\alpha
\end{pmatrix}\]
belongs to the $R_E$-orbit $R_E e$. 
These points are all distinct for $\alpha,\beta\neq 0$. Hence $|R_E e|\geq (q-1)^2$. 

We conclude
\[|R_E|=|R_E e|\cdot|\text{Stab}_{R_E}(e)|\geq  (q-1)^2 q>\frac{|\SL_2(\F_q)|}{2}\]
(for $q>3$). Therefore, $R_E=\SL_2(\F_q)$. But $\SL_2(\F_q)$ acts transitively on $\F_q^2-\{(0,0)\}$; this would force $E$ to contain all of $\F_q^2-\{(0,0)\}$, contradicting the assumption on $E$.     

\section{Some discussions}
One might ask if we can get stronger results when we replace Theorem \ref{lem4.2} by other point-line incidence bounds in the literature. In this section, we discuss this question. We first recall the following three incidence bounds.
\begin{theorem}[\cite{Ben, P.T.V.19}]\label{8.2}
    Let $P$ be a set of points and $L$ be a set of lines in $\mathbb{F}_q^3$. Then the number of incidences between $P$ and $L$ satisfies
    \[\left\vert I(P, L)-\frac{|P||L|}{q^2} \right\vert\ll q\sqrt{|P||L|}.\]
\end{theorem}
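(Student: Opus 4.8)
The plan is to derive this estimate from the expander mixing lemma applied to the point--line incidence graph of the full affine space $\mathbb{F}_q^3$. First I would record the exact parameters of this incidence structure. Let $V_1$ be the set of all $q^3$ points and $V_2$ the set of all lines. Every point lies on exactly $d_1\colonequals q^2+q+1$ lines (the number of directions in $\mathbb{F}_q^3$), every line carries exactly $d_2\colonequals q$ points, and a double count of incidences gives $|V_2|=q^2(q^2+q+1)$. Let $M$ be the $\{0,1\}$-valued biadjacency matrix with rows indexed by $V_1$ and columns by $V_2$, having a $1$ in position $(p,\ell)$ precisely when $p\in\ell$. The object to be estimated is then $I(P,L)=\mathbf{1}_P^{\mathsf T}M\,\mathbf{1}_L$, where $\mathbf{1}_P,\mathbf{1}_L$ are the indicator vectors of $P\subseteq V_1$ and $L\subseteq V_2$.

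The key step is to compute the spectrum of $M$ via $MM^{\mathsf T}$. Because two distinct points of $\mathbb{F}_q^3$ determine a unique common line, the $(p,p')$-entry of $MM^{\mathsf T}$ equals $d_1$ when $p=p'$ and equals $1$ otherwise; hence $MM^{\mathsf T}=(d_1-1)\,\mathrm{Id}+J$, where $J$ is the all-ones matrix of size $q^3$. The eigenvalues of $J$ are $q^3$ (once, along the all-ones vector) and $0$ (with multiplicity $q^3-1$), so the eigenvalues of $MM^{\mathsf T}$ are $q^3+q^2+q$ (once) and $q^2+q$ (with multiplicity $q^3-1$). Consequently the largest singular value of $M$ is $\sigma_1=\sqrt{q^3+q^2+q}$, attained along the normalized all-ones vectors on each side, while every other singular value equals $\sigma_2=\sqrt{q^2+q}\le\sqrt{2}\,q$.

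I would then run the standard expander-mixing argument. Decompose $\mathbf{1}_P=\tfrac{|P|}{q^3}\mathbf{1}_{V_1}+f$ and $\mathbf{1}_L=\tfrac{|L|}{|V_2|}\mathbf{1}_{V_2}+g$, with $f\perp\mathbf{1}_{V_1}$ and $g\perp\mathbf{1}_{V_2}$. Biregularity gives $\mathbf{1}_{V_1}^{\mathsf T}M=d_2\,\mathbf{1}_{V_2}^{\mathsf T}$ and $M\,\mathbf{1}_{V_2}=d_1\,\mathbf{1}_{V_1}$, which forces the two cross terms to vanish (since $g\perp\mathbf{1}_{V_2}$ and $f\perp\mathbf{1}_{V_1}$). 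The surviving main term is $\tfrac{|P||L|}{q^3|V_2|}\,\mathbf{1}_{V_1}^{\mathsf T}M\,\mathbf{1}_{V_2}=\tfrac{d_1}{|V_2|}|P||L|=\tfrac{|P||L|}{q^2}$, using $|V_2|=q^2 d_1$. For the remaining term I write the singular value decomposition $M=\sum_i\sigma_i u_i w_i^{\mathsf T}$; since $f\perp u_1\propto\mathbf{1}_{V_1}$, the top mode drops out and $|f^{\mathsf T}Mg|\le\sigma_2\|f\|\,\|g\|\le\sigma_2\sqrt{|P||L|}\ll q\sqrt{|P||L|}$, where $\|f\|^2\le|P|$ and $\|g\|^2\le|L|$. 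Combining the main term and this error bound yields the claim.

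The only delicate point — and the step I would verify most carefully — is the spectral computation: one must confirm that the clean identity $MM^{\mathsf T}=(d_1-1)\,\mathrm{Id}+J$ genuinely holds (it rests exactly on the fact that two points determine a line) and that the main-term coefficient $d_1/|V_2|$ collapses to precisely $1/q^2$. Everything else is routine bookkeeping, and the argument is uniform in $q$ (prime powers included), matching the generality of the statement.
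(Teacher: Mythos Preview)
The paper does not supply its own proof of this theorem; it merely quotes the bound from \cite{Ben, P.T.V.19} in the discussion section, so there is nothing in the text to compare against directly. Your argument is correct and is essentially the standard route taken in those references: view the full point--line incidence structure of $\mathbb{F}_q^3$ as a biregular bipartite graph, compute the second singular value of its biadjacency matrix (here via the identity $MM^{\mathsf T}=(d_1-1)\,\mathrm{Id}+J$, which encodes exactly that two points span a unique line), and apply the expander mixing lemma. The Fourier-analytic proofs in the cited papers amount to the same spectral decomposition written in character language, so your approach is not genuinely different from theirs---just phrased combinatorially rather than analytically. The bookkeeping (degrees $d_1=q^2+q+1$, $d_2=q$, $|V_2|=q^2d_1$, main term $d_1/|V_2|=1/q^2$, error $\sigma_2=\sqrt{q^2+q}\ll q$) is all correct.
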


\begin{theorem}[\cite{K.15}]\label{8.3}
    Let $P$ be a set of $n$ distinct points and $L$ be a set of $m$ distinct lines in $\mathbb{F}_q^3$. Assume that no plane contains more than $c\sqrt{m}$ lines from $L$ for some constant $c$. Then we have 
    \[I(P, L)\ll |L||P|^{2/5}+|P|^{6/5}.\]
\end{theorem}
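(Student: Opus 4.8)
The plan is to prove this by the polynomial (algebraic–geometric) method rather than by any Fourier-analytic estimate. Because the asserted bound is completely free of $q$ and depends only on $|P|$ and $|L|$, an input such as Theorem~\ref{8.2}, whose error term carries a factor of $q$, cannot deliver it; the estimate must instead come from the geometry of the surfaces that contain the lines of $L$. I would projectivize, view $P$ and $L$ as points and lines in $\mathbb{P}^3$, and pass to the algebraic closure $\overline{\F}_q$ wherever the geometry requires it. The skeleton of the argument is to bound, for each threshold $r$, the number of $r$-\emph{rich} points --- those incident to at least $r$ lines of $L$ --- and then to recover $I(P,L)$ by a dyadic summation over $r$.

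The first step is to produce a low-degree surface capturing the configuration. The space of forms of degree $d$ in four variables has dimension $\binom{d+3}{3}\approx d^3/6$; passing through a point imposes one linear condition and containing a line imposes $d+1$ conditions, so a dimension count places any set of $N$ points on a surface $S$ of degree $d$ as soon as $d^{3}\gg N$, and any collection of lines on a surface as soon as $d^{2}\gg|L|$. The second step handles the lines not contained in $S$: by B\'ezout such a line meets $S$, and hence the points of $P$ lying on $S$, in at most $d$ points, so these lines contribute at most $|L|\cdot d$ incidences. The third and decisive step handles the lines lying on $S$, where the classification of lines on a surface enters. A surface of degree $d$ with no ruled component carries only $O(d^{2})$ lines, a quantity one can control, whereas ruled components --- planes, quadrics, cones, and higher ruled surfaces --- may carry whole families of lines. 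This is exactly where the hypothesis that no plane contains more than $c\sqrt{|L|}$ lines is consumed: it caps the contribution of the two-dimensional planar rulings, while a companion geometric analysis caps the remaining rulings. Balancing the degree $d$ against the threshold $r$ and against the contributions of the second and third steps, and then summing the resulting rich-point estimate dyadically over $r$, is what produces the two terms $|L||P|^{2/5}$ and $|P|^{6/5}$.

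The main obstacle is executing the third step in arbitrary characteristic. Over $\R$ or $\Cbb$ one controls lines on surfaces using the Salmon/flecnode polynomial and real polynomial partitioning via the ham--sandwich theorem, and neither tool is available over $\F_q$; one must instead establish characteristic-free bounds on the number of lines on a surface with no ruled component, together with a structural description of its ruled locus. Inseparability in small characteristic is a genuine danger, since generically finite maps can degenerate, so these surface-theoretic lemmas have to be formulated so as to remain valid over $\F_q$ --- this is the technical core of \cite{K.15}. Granting those lemmas, the incidence bound follows from the bookkeeping of the dimension count, B\'ezout's theorem, and the dyadic optimization described above.
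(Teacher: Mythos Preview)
The paper does not prove this theorem at all: Theorem~\ref{8.3} is simply quoted from Koll\'ar~\cite{K.15} in the discussion section, listed alongside Theorems~\ref{8.2} and~\ref{8.4} as one of several known incidence bounds, and then plugged into the argument to see what it would yield. No proof or proof sketch is offered in the paper, so there is no ``paper's own proof'' to compare your proposal against.

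That said, your outline is a reasonable summary of the polynomial-method strategy that Koll\'ar actually carries out in~\cite{K.15}: pass to a low-degree surface containing the (rich) points, use B\'ezout to control lines not on the surface, and use the structure of lines on a surface --- with the non-planarity hypothesis consumed exactly at the ruled/planar-component step --- to control the rest. Your emphasis that the characteristic-free surface-theoretic lemmas (no ham--sandwich, no flecnode argument in the real sense) are the technical heart is accurate. One cautionary note: you describe putting both the points and the lines on a surface via a dimension count, but in the actual argument one puts the \emph{points} on a surface of degree $d\approx |P_r|^{1/3}$ and then deduces that rich lines are forced onto it by B\'ezout (since they already pass through many surface points); the line-count inequality $d^2\gg |L|$ is not what drives the construction. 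Getting the bookkeeping of which configuration is interpolated and which is trapped is what determines the final exponents, so be careful there if you write this out in full.
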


By a generic projection, one can show that the number of incidences between $P$ and $L$ is at most the number of incidences between a point set and a line set in two dimensions over an arbitrary fields, which is quite weak in general. Therefore, the main result in \cite{SD} implies the following. 
\begin{theorem}\label{8.4}
    Let $P$ be a set of points and $L$ be a set of lines in $\mathbb{F}_p^3$. Assume that $|P|^{7/8}<|L|<|P|^{8/7}$ and $|P|^{-2}|L|^{13}\ll p^{15}$, then we have
    \[I(P, L)\ll |P|^{11/15}|L|^{11/15}.\]
\end{theorem}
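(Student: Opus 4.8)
The plan is to reduce the three-dimensional incidence problem to a planar one by means of a generic linear projection $\pi\colon\Fbb_p^3\to\Fbb_p^2$ and then to invoke the planar incidence bound that is the main result of \cite{SD}. Concretely, I would fix a projection direction $v\in\Pbb^2(\Fbb_p)$, let $\pi$ be the quotient map by $\langle v\rangle$, and arrange that three non-degeneracy conditions hold: (a) $\pi$ is injective on $P$; (b) no line of $L$ is parallel to $v$, so that each $\ell\in L$ maps to a genuine line $\pi(\ell)$ rather than collapsing to a point; and (c) distinct lines of $L$ map to distinct lines of $\Fbb_p^2$. Condition (a) fails only for the $\le\binom{|P|}{2}$ directions spanned by a difference of two points of $P$, and condition (b) fails only for the $\le|L|$ directions of lines in $L$. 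Condition (c) is the delicate one: since two skew lines can never project to a common line, $\ell_1$ and $\ell_2$ collapse exactly when they are coplanar, say spanning a plane $\Pi$, and $v$ is parallel to $\Pi$; hence the bad directions for (c) lie in $\bigcup_{\Pi}\mathrm{dir}(\Pi)$, the union taken over planes $\Pi$ carrying at least two lines of $L$.

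Granting a direction $v$ for which (a)--(c) hold, I would set $P'=\pi(P)$ and $L'=\pi(L)$, so that $|P'|=|P|$ and $|L'|=|L|$. Because $\pi$ carries every incident pair $(m,\ell)$ with $m\in\ell$ to an incident pair $(\pi(m),\pi(\ell))$, and because (a) together with (c) make the induced map on incident pairs injective, one obtains $I(P,L)\le I(P',L')$. The hypotheses $|P|^{7/8}<|L|<|P|^{8/7}$ and $|P|^{-2}|L|^{13}\ll p^{15}$ are precisely the balancedness and field-size conditions required to apply the main theorem of \cite{SD} to $(P',L')$; doing so gives $I(P',L')\ll|P'|^{11/15}|L'|^{11/15}=|P|^{11/15}|L|^{11/15}$, and Theorem \ref{8.4} follows.

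The main obstacle is the existence of a good projection, that is, the first step, and within it condition (c). Over $\Rbb$ or $\Cbb$ the bad directions form a proper subvariety and genericity is automatic, but over $\Fbb_p$ one must verify that the bad directions do not exhaust $\Pbb^2(\Fbb_p)$, which has only $\approx p^2$ points. Conditions (a) and (b) cost merely $\ll|P|^2+|L|$ directions, whereas the naive bound for (c) is $\approx p$ times the number of planes meeting $L$ in two lines, which can be as large as $|L|^2p$; controlling this is exactly where the interplay between $|P|$, $|L|$, and $p$ becomes essential, and it is the reason the resulting bound is, as noted in the text, rather weak.

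I would address this obstacle by a first-moment argument over a random $v$: one bounds the expected number of non-injective point pairs (each occurring with probability $\approx p^{-2}$) and of collapsing line pairs (each coplanar pair collapsing with probability $\approx p^{-1}$), selects a direction achieving at most the average, and then deletes the few offending points and lines before applying \cite{SD}. This cleanly yields the stated bound in the regime where these error terms are $o(|L|)$, and quantifying that regime—and reconciling it with the full range of $|L|$ relative to $p$ permitted by the hypotheses—is the part of the argument that demands the most care.
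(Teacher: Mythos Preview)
Your approach is exactly the one the paper indicates: the paper's entire justification for Theorem \ref{8.4} is the sentence ``By a generic projection, one can show that the number of incidences between $P$ and $L$ is at most the number of incidences between a point set and a line set in two dimensions\ldots\ Therefore, the main result in \cite{SD} implies the following,'' and you have correctly fleshed out that sketch. Your identification of condition (c) as the delicate point and your plan to handle it by a first-moment deletion argument are appropriate; the paper itself gives no further detail here.
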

If we just plug these incidence bounds in the proof of Theorem \ref{main-thm}, without checking conditions on the sets and the underlying fields, one has 
\begin{itemize}
    \item Theorem \ref{8.2} implies $|S|\le q^2m_1$. 
    \item Theorem \ref{8.3} implies $|S|\le (m_0m_1)^{5/3}$.
    \item Theorem \ref{8.4} implies $|S|\le m_0^{7/4}m_1^{11/4}$.
\end{itemize}
All of these bounds are not better than that of Theorem \ref{main-thm}.

\section{Acknowledgements}
The authors would like to thank the Vietnam Institute for Advanced Study in Mathematics (VIASM) for the hospitality and the excellent working conditions, where part of this work was done. Thang Pham was partially supported by ERC Advanced Grant no. 882971, ``GeoScape", and by the Erd\H{o}s Center. We are very grateful to Misha Rudnev for pointing out that, over a prime ground field, a simpler group-theoretic argument can be applied, as well as for his many valuable comments.

\end{document}